\numberwithin{equation}{section}
\newtheorem{theorem}{Theorem}[section]
\newtheorem{lemma}[theorem]{Lemma}
\newtheorem{proposition}[theorem]{Proposition}
\theoremstyle{remark}
\newtheorem{example}{Example}[section]
\newtheorem{definition}{Definition}[section]
\newcommand{\dl}{\delta}
\newcommand{\e}{\epsilon}
\newcommand{\ity}{\infty}
\newcommand{\C}{\mathbb{C}}
\newcommand{\ti}{\widetilde}
\newcommand{\ta}{\theta}
\newcommand{\io}{\iota}
\newcommand{\N}{\mathbb{N}}
\newcommand{\al}{\alpha}
\begin{document}

\title[attractors and chain recurrence  in noncompact space]
{attractors and chain recurrence in noncompact space  for semigroup of continuous maps}

\author[S. Kumar]{Sanjay Kumar}
\address{Sanjay Kumar\\Department of Mathematics\\ Deen Dayal Upadhyaya College\\University of Delhi\\Sector-3, Dwarka\\Delhi--110078, India}
\email{skpant@ddu.du.ac.in }

\author[K. Lalwani]{Kushal Lalwani \textsuperscript{*}}
\address{Kushal Lalwani\\Department of Mathematics\\ University of Delhi\\Delhi--110007, India}
\email{lalwani.kushal@gmail.com }

\thanks{\noindent \textsuperscript{*}Communicating author}

\subjclass[2020]{37B20, 37B35}
\keywords{attractor, basin of attraction, chain recurrent set}

\begin{abstract}
We shall address the alternative definition of chain recurrent  set for the action of a  semigroup of continuous self maps, given by M. Hurley \cite {mh} in noncompact space. Following this, we shall address the characterization of chain recurrence in terms of attractors given by C. Conley in \cite {conley}. 
\end{abstract}

\maketitle

\section{Introduction}
A {\it continuous semigroup}  is a set of (non-identity) continuous self maps, of a topological space $X$,  which are closed under the composition. A semigroup $G$ is said to be generated by a family $\{g_{\al}\}_{\al}$ of  continuous self maps of a topological space $X$  if every element of $G$ can be expressed as compositions of iterations of the elements of  $\{g_{\al}\}_{\al}$. We denote this by $G=<g_{\al}>_{\al}$. The space $X$ is assumed to be Hausdorff and first countable.

M. Hurley had shown that chain recurrent set for a semiflow is the complement of the union of the set $B(A) \setminus A$, as $A$ varies over the collection of attractors and $B(A)$ denotes the basin of attraction. This concept for flows (continuous action of a group) on a compact space was originally introduced by Conley \cite {conley}. Later, Hurley in \cite {mh} extended this characterization for semiflows (continuous action of a semigroup) without the assumption of compactness. The main aim  of this paper is to study the notion of attractors and chain recurrence in the context of a continuous semigroup such that the classical case turns out as a special case of this generalization. In \cite {kl1,kl2}, we have studied the chain recurrent set and established the notion of attractors for semigroup of continuous maps. Also, we have extended the characterization of chain recurrent set in terms of attractors for compact metric spaces.  Here, we shall follow the treatment of Hurley for noncompact metric spaces and see how far this characterization applies in this setting of semigroup of continuous maps.

%
%
%

\section{Chain Recurrence in Noncompact Space} \label{secnat}

In \cite{mh1,mh2,mh}, Hurley generalized the definition of chain recurrence for noncompact spaces and expressed it in terms of attractors, analogous to the work of C. Conley.
The equivalence of the two definitions follows from the following theorems of Hurley. We will see the counterpart of this theorem in the context of action of a semigroup of continuous maps in Theorem \ref{atth}.

\begin{theorem} \cite{mh}
If $(X,d)$ is a metric space and $f: X \to X$ is  continuous, then the  chain recurrent set ${\sf{CR}}(f)$, is the complement of the union of sets $B(A) \setminus A$ as $A$ varies over the collection of attractors  of $f$:
$$X \setminus {\sf{CR}}(f)=\bigcup_A[B(A) \setminus A]$$
 where $B(A)$ denotes the basin of attraction of A.
\end{theorem}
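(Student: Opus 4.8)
The plan is to establish the set equality by proving the two inclusions separately, with the crucial device throughout being a \emph{continuous} tolerance function $\epsilon \colon X \to (0,\infty)$ in the definition of an $\epsilon$-chain rather than a constant tolerance; this is exactly what makes the statement survive the passage from compact to noncompact $X$.

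First I would prove $\bigcup_A [B(A)\setminus A] \subseteq X \setminus \mathsf{CR}(f)$, i.e. that a point lying in a basin but outside its attractor cannot be chain recurrent. Fix an attractor $A$ with a trapping neighbourhood $U$ satisfying $\overline{f(U)} \subseteq \mathrm{int}(U)$ and $A=\bigcap_{n\geq 0} f^n(U)$, and take $x \in B(A)\setminus A$. The idea is that the trapping region is a one-way gate for chains: since there is a positive gap between $\overline{f(U)}$ and $X\setminus\mathrm{int}(U)$, one can pick $\epsilon$ so small on $\overline U$ that $y\in\overline U$ and $d(f(y),z)<\epsilon(f(y))$ force $z\in\mathrm{int}(U)$. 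Thus any $\epsilon$-chain that once meets $\overline U$ stays inside $U$ forever after. Combining this with a Lyapunov function $L$ for $A$ (continuous, with $L^{-1}(0)=A$ on $B(A)$ and $L\circ f<L$ strictly off $A$) and shrinking $\epsilon$ further so that $L$ cannot climb back up along $\epsilon$-chains, one sees that every $\epsilon$-chain issuing from $x$ is eventually trapped at a level of $L$ strictly below $L(x)=c>0$, and hence can never return to $x$. Therefore $x\notin\mathsf{CR}(f)$.

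Next comes the harder inclusion $X\setminus \mathsf{CR}(f)\subseteq \bigcup_A[B(A)\setminus A]$: given $x\notin\mathsf{CR}(f)$, I must manufacture an attractor $A$ with $x\in B(A)\setminus A$. By hypothesis there is a continuous $\epsilon$ admitting no $\epsilon$-chain from $x$ back to $x$. Let $N$ denote the set of all endpoints of positive-length $\epsilon$-chains starting at $x$. The structural facts to verify are: $N$ is open (perturb the final jump), it is forward-chain-invariant in the strong sense that $z\in N$ and $d(f(z),w)<\epsilon(f(z))$ imply $w\in N$, and consequently $f(\overline N)\subseteq N$ (if $z_k\in N\to z$ then $f(z)$ eventually lies in the $\epsilon$-ball about $f(z_k)\in N$). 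Hence $\overline N$ is a trapping region, and setting $A=\bigcap_{n\geq 1}f^n(\overline N)\subseteq N$ yields an attractor. Now $f(x)\in N$ via the one-step chain $x,f(x)$, so the forward orbit enters the trapping region and $f^n(x)\to A$, giving $x\in B(A)$; while $x\notin N\supseteq A$ precisely because no $\epsilon$-chain runs from $x$ to itself, giving $x\notin A$. This places $x$ in $B(A)\setminus A$.

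The main obstacle is the noncompactness, which is why the variable tolerance $\epsilon(\cdot)$ is indispensable rather than a convenience. With a constant tolerance, forward orbits and chains can escape to infinity, the chain-reachable set $N$ can fail to trap, $\omega$-limit sets can be empty, and the intersection $\bigcap_n f^n(\overline N)$ need not be a genuine attractor; the continuous $\epsilon$ is what lets the trapping estimates be localized near the topological boundary of $\overline N$ where they are actually needed. I expect the delicate steps to be (i) verifying that $\overline N$ truly traps under $f$ when $\epsilon$ varies, and (ii) confirming in the first inclusion that a continuous Lyapunov function with the stated strict-decrease property exists on $B(A)$ in the absence of compactness; both reduce to careful estimates comparing $d(f(\cdot),\cdot)$ against $\epsilon$, and step (i) is the crux where Hurley's reformulation of chain recurrence earns its keep.
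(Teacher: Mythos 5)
Your second inclusion, $X\setminus\mathsf{CR}(f)\subseteq\bigcup_A[B(A)\setminus A]$, is essentially the argument the paper itself uses for the semigroup analogue (Theorem \ref{atth}): your set $N$ of endpoints of positive-length $\epsilon$-chains from $x$ is exactly the paper's $U$, and the verifications that it is open, forward-chain-invariant, and trapping are the same. Two small corrections there: the trapping condition you actually need is $\mathrm{cl}(f(N))\subseteq N$ rather than $f(\overline N)\subseteq N$ (your limit argument does give the former once you observe that $\epsilon(f(z_k))\to\epsilon(y)>0$, as in Lemma \ref{mhlm}), and you should not argue via ``$f^n(x)\to A$'': in a noncompact space the attractor determined by $N$ can be empty while its basin is not (see Type II of the paper's Example), and $x\in B(A)$ holds simply because $f(x)\in N$ and the basin is defined through the trapping region, not through convergence to $A$.

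The first inclusion is where there is a genuine gap. You route it through a strict Lyapunov function $L$ on $B(A)$ with $L^{-1}(0)=A$ and $L\circ f<L$ off $A$, but (a) the existence of such an $L$ on a noncompact basin is itself a result of comparable depth, which you defer rather than prove, and which must also make sense when $A=\emptyset$; and (b) even granting $L$, the ``cannot climb back up'' step is not closed: an $\epsilon$-chain issuing from $x$ need not stay in $B(A)$, and after visiting $X\setminus B(A)$, where $L$ is uncontrolled, it can re-enter the basin at a level above $L(x)$ and then descend back through the level $L(x)$ to reach $x$. Ruling this out requires the additional constraint $\epsilon(z)<d(z,X\setminus B(A))$ on the basin (so chains starting there never leave) together with an $\epsilon$ dominated by the possibly degenerate decrease $L-L\circ f$; none of this appears in your sketch. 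The paper's route avoids all of it: assume $p\in B(A)\cap\mathsf{CR}(f)$, use the iterate carrying $p$ into $U$ (in the semigroup proof this is folded into the base map $\hat f_k=f_k h f$), choose $\epsilon$ against the gap function $\eta$ separating $\mathrm{cl}(\widetilde U)$ from $X\setminus U$ so that every point of the chain after the first step is absorbed into $U$, and then read off from the final chain step that $p$ is approximated by arbitrarily deep images of $U$ --- hence $p\in A$ by the very definition of the attractor, a contradiction. I would redo your first inclusion along those lines rather than through a Lyapunov function.
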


\begin{theorem}\cite {mh}
Suppose that $(X,d)$ is a metric space and $\Phi: [0,\ity)\times X \to X$ is a continuous flow. The chain recurrent set of $\Phi$, denoted by ${\sf{CR}}(\Phi)$, is the complement of the union of sets $B(A) \setminus A$ as $A$ varies over the collection of attractors  of $\Phi$ and where $B(A)$ denotes the basin of attraction of A:
$$X \setminus {\sf{CR}}(\Phi)=\bigcup_A[B(A) \setminus A].$$
\end{theorem}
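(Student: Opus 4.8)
The plan is to establish the set equality by proving the two inclusions separately, in close analogy with the proof of the preceding theorem for a single continuous map, but working throughout with $(\e,T)$-chains: a finite sequence $x=x_0,x_1,\dots,x_n=y$ together with times $t_0,\dots,t_{n-1}\ge T$ such that $d(\Phi(t_i,x_i),x_{i+1})<\e$ for each $i$. Recall that $x\in{\sf{CR}}(\Phi)$ precisely when for every $\e>0$ and every $T>0$ there is an $(\e,T)$-chain from $x$ to $x$, and monotonicity of the chain relation in $\e$ and $T$ will be used freely.

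For the inclusion $\bigcup_A[B(A)\setminus A]\subseteq X\setminus{\sf{CR}}(\Phi)$, I would fix an attractor $A$ and a point $x\in B(A)\setminus A$ and show $x\notin{\sf{CR}}(\Phi)$. Using that $A$ admits a neighborhood basis of trapping regions shrinking to $A$, I would select a trapping neighborhood $U$ of $A$ with $x\notin\overline U$, which is possible since $A$ is closed and $d(x,A)>0$. Because $x\in B(A)$, its forward orbit eventually enters and remains in $U$; and because $U$ is trapping, for $\e$ smaller than the trapping margin and smaller than $d(x,\overline U)$, and for $T$ large, any $(\e,T)$-chain issuing from $x$ is forced into $U$ after finitely many steps and can never leave it. Since $x\notin\overline U$, no such chain can return to $x$, so $x\notin{\sf{CR}}(\Phi)$.

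The substantive inclusion is $X\setminus{\sf{CR}}(\Phi)\subseteq\bigcup_A[B(A)\setminus A]$. Given $x\notin{\sf{CR}}(\Phi)$, fix $\e_0>0$ and $T_0>0$ for which there is no $(\e_0,T_0)$-chain from $x$ to $x$, and set
$$P=\{\,y\in X:\text{there is an }(\e_0,T_0)\text{-chain from }x\text{ to }y\,\}.$$
I would verify that $P$ is open (the final jump may be perturbed), that $\Phi(t,P)\subseteq P$ for all $t\ge T_0$ (append one more chain step with zero jump), and that the forward orbit of $x$ lies in $P$ from time $T_0$ onward. Thus $P$ serves as a trapping region, and I would take $A=\bigcap_{t\ge0}\overline{\Phi([t,\ity)\times P)}$ to be the attractor it determines. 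The hypothesis that no chain from $x$ returns to $x$ translates into $d(\Phi(t,y),x)\ge\e_0$ for every $y\in P$ and every $t\ge T_0$; passing to limits this gives $d(x,A)\ge\e_0$, so $x\notin A$, while the forward orbit of $x$ is attracted to $A$, giving $x\in B(A)$. Hence $x\in B(A)\setminus A$.

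The main obstacle is the noncompactness of $X$, which is exactly what distinguishes this from the classical Conley setting. In a noncompact space the set $A=\w(P)$ may be empty or may fail to attract a full neighborhood uniformly, and the trapping regions used in the first inclusion need not have a positive margin because their closures need not be compact. Consequently the delicate point is to carry out the constructions within Hurley's noncompact notion of attractor and basin, ensuring that the set $A$ produced above genuinely qualifies as an attractor and that $B(A)$ contains $x$ in that sense; this is where the flow argument must depart from a naive transcription of the map proof. I would also note that one cannot simply reduce to the preceding theorem via the time-one map $\Phi(1,\cdot)$, since attractors of the time-one map need not be invariant under the whole flow, so the two collections of attractors---and their basins---do not coincide.
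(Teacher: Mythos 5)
Your proposal is built on the compact-space notion of chain, and that is where it breaks down. Throughout you take $\e$ to be a positive \emph{constant}, whereas the entire point of this theorem (it is stated for an arbitrary metric space, and is quoted from Hurley's paper precisely for that reason) is that chains must be taken with respect to positive continuous \emph{functions} $\e\in P(X)$, with the jump condition $d(\Phi(t_i,x_i),x_{i+1})<\e(\Phi(t_i,x_i))$. This is not cosmetic: with constant $\e$ the stated equality is false in noncompact spaces. Concretely, your forward inclusion asks for ``$\e$ smaller than the trapping margin,'' but in a noncompact space the distance between ${\sf{cl}}(\widetilde U)$ and $X\setminus U$ can be zero, so no positive constant works; one needs the function $\eta(x)=\tfrac12\bigl(d(x,{\sf{cl}}(\widetilde U))+d(x,X\setminus U)\bigr)$ as in Proposition \ref{atp1} and the proof of Theorem \ref{atth}. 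Symmetrically, your converse begins by fixing a constant $\e_0>0$ with no $(\e_0,T_0)$-chain from $x$ to itself, but a point that fails to be chain recurrent in the $P(X)$ sense may still admit $(\e_0,T_0)$-chains to itself for every constant $\e_0$, so the hypothesis you start from is not what $x\notin{\sf{CR}}(\Phi)$ gives you. You flag noncompactness as ``the main obstacle'' at the end, but the argument as written never incorporates the fix.

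There is a second concrete gap in the forward inclusion. You invoke a neighborhood basis of trapping regions shrinking to $A$ and the positivity of $d(x,A)$; neither is available here. An attractor in this setting is determined by a single trapping region, need not admit arbitrarily small trapping neighborhoods, and can be \emph{empty} while its basin is large (see Type II in the example following Theorem \ref{atth}, where $A_R=\emptyset$ and $B(A_R)=\{z:|z|>1\}$) --- in that case ``choose $U$ with $x\notin\overline U$'' does nothing for you. The correct argument, as in the proof of Theorem \ref{atth}, runs the other way: assume $p\in B(A)\setminus A$ is chain recurrent, use $(\eta/m,\cdot)$-chains to show every chain from $p$ to itself is trapped in $U$ with endpoints accumulating at $p$, and conclude $p\in A$ by the very definition of the attractor, a contradiction. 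Your second inclusion (the set $P$ of chain-reachable points as a trapping region, $x\notin A$, $\Phi(T_0,x)\in P$ so $x\in B(A)$) is structurally the right idea and matches the paper's converse argument, but it too must be rerun with $\e\in P(X)$ and with Lemma \ref{mhlm} to get ${\sf{cl}}(\widetilde P)\subset P$.
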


Following Hurley \cite {mh}, we introduce the notion of a chain recurrent set  for  a continuous semigroup in case of a noncompact space by using strictly positive continuous functions as follows: Let $P(X)$ denote the set of positive continuous functions on $X$, that is, $P(X):=\{\e:X\to (0,\ity): \e {\rm\ is\  continuous}\}$.

\begin{definition} \index{chain! for noncompact spaces}
Let $G$ be a semigroup of continuous self maps defined on a metric space $(X,d)$. Let $a,b \in X$, $g\in G$ and $\e \in P(X)$ be  given. An $(\e,g)${\it-chain} from $a$ to $b$ means a finite sequence $(a=x_1,\dots, x_{n+1}=b;g_1, \dots, g_n)$, where $x_i\in X$ and $g_i \in \widehat G$ such that $d(g_i\circ g(x_i),x_{i+1})< \e(g_i\circ g(x_i))$, for each $i=1, \dots ,n$.
\end{definition}

\begin{definition} \index{chain recurrent point! for noncompact spaces}
Let $G$ be a semigroup on a metric space $(X,d)$. A point $x\in X$ is called a {\it chain recurrent point} for $G$ if for every $\e\in P(X)$ and every $g\in G$ there exists an $(\e, g)$-chain from $x$ to itself. The set of all chain recurrent points for $G$ is denoted by ${\sf{CR}}(G)$.
\end{definition}
Following two lemmas have been proved by Hurley in \cite {mh}.

\begin{lemma}\label{hlm}
Let $(X,d_X)$ and $(Y,d_Y)$ be two  metric spaces and $f: X \to Y$ is a continuous map. Then, for each $\e \in P(Y)$ there is a $\dl \in P(X)$ such that
$$d_Y(f(x),f(y))< \e (f(x)) \quad {\rm{whenever}} \quad d_X(x,y)<\dl(x). $$
\end{lemma}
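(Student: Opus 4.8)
The plan is to reduce the statement to the construction of a single Lipschitz (hence continuous) positive function lying below a suitable pointwise ``radius of continuity.'' First I would set $\phi := \e\circ f$, i.e. $\phi(x) = \e(f(x))$; since $f$ and $\e$ are continuous, $\phi \in P(X)$, and the desired conclusion becomes: produce $\dl\in P(X)$ with $d_Y(f(x),f(y)) < \phi(x)$ whenever $d_X(x,y) < \dl(x)$. For each $x$ I would record the largest admissible radius
$$g(x) := \sup\{\, r\in(0,1] : d_Y(f(x),f(y)) < \phi(x)\ \text{for all } y \text{ with } d_X(x,y)\le r \,\},$$
which is strictly positive at every $x$ because $f$ is continuous at $x$ and $\phi(x)>0$. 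By construction $d_X(x,y) < g(x)$ forces $d_Y(f(x),f(y)) < \phi(x)$, so any $\dl\in P(X)$ with $\dl\le g$ suffices.

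The function $g$ need not be continuous, and this is where the real work lies. The key observation I would establish is that $g$ is locally bounded below by positive constants: given $x_0$, continuity of $f$ yields $r_1>0$ with $f(B(x_0,r_1))\subseteq B_Y(f(x_0),\tfrac14\phi(x_0))$, while continuity of $\phi$ lets me shrink $r_1$ so that $\phi(x) > \tfrac12\phi(x_0)$ for all $x\in B(x_0,r_1)$. Then for any $x\in B(x_0,r_1/2)$ and any $y$ with $d_X(x,y)\le r_1/2$, both $x$ and $y$ lie in $B(x_0,r_1)$, so $d_Y(f(x),f(y)) < \tfrac12\phi(x_0) < \phi(x)$; consequently $g(x)\ge r_1/2$ throughout $B(x_0,r_1/2)$.

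With this local positivity in hand, I would manufacture $\dl$ by the inf-convolution
$$\dl(x) := \inf_{z\in X}\big( g(z) + d_X(x,z) \big).$$
A routine estimate shows $\dl$ is $1$-Lipschitz, hence continuous, and taking $z=x$ gives $\dl(x)\le g(x)$. Positivity is where the local lower bound pays off: fixing $x_0$ and choosing $m,\rho>0$ with $g\ge m$ on $B(x_0,\rho)$, I split the infimum according to whether $z\in B(x_0,\rho)$ (so $g(z)\ge m$) or $z\notin B(x_0,\rho)$ (so $d_X(x_0,z)\ge\rho$ and $g\ge 0$), obtaining $\dl(x_0)\ge\min(m,\rho)>0$. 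Thus $\dl\in P(X)$ with $\dl\le g$, and the defining property of $g$ yields $d_X(x,y)<\dl(x)\Rightarrow d_Y(f(x),f(y))<\phi(x)=\e(f(x))$, as required.

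I expect the only genuine obstacle to be the non-continuity of the raw radius $g$: one cannot set $\dl=g$ directly, and a naive inf-convolution can collapse to zero unless $g$ is bounded away from $0$ locally. The joint continuity of $f$ and $\e$ is precisely what rules this out, and the inf-convolution then repairs continuity without destroying positivity.
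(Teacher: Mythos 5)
Your argument is correct and complete. Note that the paper itself supplies no proof of this lemma --- it is stated as a result quoted from Hurley \cite{mh} --- so there is no in-paper argument to match yours against; what you have written is a genuine, self-contained proof. The two points where such a proof could go wrong are exactly the ones you isolate and handle properly: (i) the raw ``radius of continuity'' $g$ is strictly positive at each point (nonemptiness of the set defining the supremum follows from continuity of $f$ and $\e(f(x))>0$) but need not be continuous, and (ii) a continuous positive minorant must be manufactured from it. Your local lower bound $g\ge r_1/2$ on $B(x_0,r_1/2)$, obtained by routing the triangle inequality through $f(x_0)$ and shrinking the ball so that $\phi>\tfrac12\phi(x_0)$ on it, is exactly what is needed to keep the $1$-Lipschitz inf-convolution $\dl(x)=\inf_z\bigl(g(z)+d_X(x,z)\bigr)$ bounded below by $\min(m,\rho)>0$ near each point. (The only cosmetic remark: since you restrict the supremum to $r\in(0,1]$, the local bound should read $g\ge\min(r_1/2,1)$, which is harmless if you normalize $r_1\le 1$.) Hurley's own route to this statement in \cite{mh1,mh} is the standard one via paracompactness of metric spaces --- covering $X$ by balls on which a constant radius works and gluing with a locally finite partition of unity --- so your inf-convolution is a somewhat more elementary and more explicit alternative that buys a concrete $1$-Lipschitz $\dl$ rather than merely a continuous one; both approaches rely on the same local positivity estimate.
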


\begin{lemma}\label{mhlm}
If $\e \in P(X)$ there is a $\dl \in P(X)$ with $\dl <\e/2$ and satisfying $\e(y)>\e(x)/2$ whenever $d(y,x)<\dl(x)$.
\end{lemma}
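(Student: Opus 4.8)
The plan is to deduce this lemma from Lemma \ref{hlm} rather than to build $\dl$ by hand. First I would regard the given $\e$ as a continuous map $\e \colon X \to (0,\ity)$, taking the codomain to be the metric space $Y=(0,\ity)$ with the metric inherited from $\R$. This is the crucial choice: on $(0,\ity)$ the function $\eta(t)=t/2$ lies in $P(Y)$, being continuous and strictly positive, which it would fail to be on all of $\R$. Applying Lemma \ref{hlm} to $f=\e$ and this $\eta$ then produces a $\dl_0 \in P(X)$ such that $|\e(x)-\e(y)| < \eta(\e(x)) = \e(x)/2$ whenever $d(x,y)<\dl_0(x)$.

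From this inequality the required oscillation bound follows immediately, since $|\e(x)-\e(y)|<\e(x)/2$ gives $\e(y) > \e(x) - \e(x)/2 = \e(x)/2$. Thus $\dl_0$ already satisfies the second requirement, namely $\e(y) > \e(x)/2$ whenever $d(y,x)<\dl_0(x)$. It remains only to enforce the first requirement $\dl < \e/2$, which I would achieve by setting $\dl(x) = \min\{\dl_0(x),\, \e(x)/3\}$. Then $\dl \in P(X)$ as the minimum of two strictly positive continuous functions, and $\dl \le \e/3 < \e/2$ everywhere because $\e>0$.

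The only point needing a word of care is that shrinking $\dl_0$ to $\dl$ must not destroy the oscillation bound; but since $\dl(x) \le \dl_0(x)$, any $y$ with $d(y,x)<\dl(x)$ also satisfies $d(y,x)<\dl_0(x)$, so the inequality $\e(y) > \e(x)/2$ persists. This is essentially the whole argument, and I expect no genuine obstacle once the codomain is taken to be $(0,\ity)$: that single choice is what lets the variable-modulus continuity of Lemma \ref{hlm} do all the work. If one instead preferred a self-contained construction, the natural candidate is the Lipschitz regularization $g(x)=\inf_{z}[\e(z)+d(x,z)]$; there the delicate step is that $g$ bounds $\e(y)$ only in terms of $g(x)$ rather than $\e(x)$, and reconciling these two factors on a noncompact space is precisely the difficulty that invoking Lemma \ref{hlm} sidesteps.
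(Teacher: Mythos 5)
Your proof is correct: applying Lemma \ref{hlm} to $\e$ viewed as a continuous map into the metric space $(0,\ity)$ with $\eta(t)=t/2\in P((0,\ity))$ yields the oscillation bound, and shrinking to $\dl=\min\{\dl_0,\e/3\}$ harmlessly secures $\dl<\e/2$. The paper itself states this lemma without proof, attributing it to Hurley \cite{mh}, and your reduction to Lemma \ref{hlm} is precisely the natural (and, in Hurley's original treatment, essentially the actual) argument, so there is nothing to fault.
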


\begin{theorem}
If $G$ is abelian then the chain recurrent set ${\sf{CR}}(G)$ is invariant under $G$.
\end{theorem}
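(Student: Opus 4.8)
The statement asserts that if $x \in {\sf{CR}}(G)$ and $h \in G$, then $h(x) \in {\sf{CR}}(G)$; concretely, given an arbitrary $\e \in P(X)$ and $g \in G$, I must produce an $(\e,g)$-chain from $h(x)$ to itself. The plan is to take a suitable chain from $x$ back to $x$, push it forward by $h$ point-by-point, and use the abelian hypothesis to reorganize the compositions so that each transition point of the new chain is exactly $h$ applied to the corresponding transition point of the old chain; Lemma \ref{hlm} then converts the old (small) error into the new error measured against the correct base point.

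First I would apply Lemma \ref{hlm} to the continuous map $h \colon X \to X$ and the given $\e \in P(X)$, obtaining $\dl \in P(X)$ with $d(h(u),h(v)) < \e(h(u))$ whenever $d(u,v) < \dl(u)$. Since $x$ is chain recurrent, for this $\dl$ and the same $g$ there is a $(\dl,g)$-chain from $x$ to itself, say $(x = y_1,\dots,y_{n+1}=x;\,h_1,\dots,h_n)$ with $h_i \in \widehat G$ and $d(h_i\circ g(y_i),\,y_{i+1}) < \dl(h_i\circ g(y_i))$ for every $i$. I then set $x_i := h(y_i)$ and $g_i := h_i$, which gives a candidate sequence $(h(x)=x_1,\dots,x_{n+1}=h(x);\,g_1,\dots,g_n)$ whose endpoints are both $h(x)$ because $y_1 = y_{n+1} = x$. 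The pivotal identity is $g_i\circ g(x_i) = h_i\circ g\circ h(y_i) = h\bigl(h_i\circ g(y_i)\bigr)$, and this is precisely where the abelian assumption is used: commuting $g$ past $h$ and then $h_i$ past $h$ yields $h_i\circ g\circ h = h\circ h_i\circ g$ as maps (the case $h_i = \mathrm{id}$ being trivial), so the $i$-th transition point of the new chain is $h$ applied to that of the old chain.

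With this identity in hand, together with $x_{i+1} = h(y_{i+1})$, the required condition $d(g_i\circ g(x_i),\,x_{i+1}) < \e(g_i\circ g(x_i))$ becomes $d\bigl(h(h_i\circ g(y_i)),\,h(y_{i+1})\bigr) < \e\bigl(h(h_i\circ g(y_i))\bigr)$, which follows immediately from the choice of $\dl$ applied with $u = h_i\circ g(y_i)$ and $v = y_{i+1}$. Since $\e$ and $g$ were arbitrary, this exhibits an $(\e,g)$-chain from $h(x)$ to itself for all $\e$ and $g$, so $h(x) \in {\sf{CR}}(G)$ and ${\sf{CR}}(G)$ is invariant under $G$. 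I expect the only delicate point to be bookkeeping rather than any deep difficulty: because $\e$ is a position-dependent function and not a constant, the estimate after applying $h$ must land with base point $\e(g_i\circ g(x_i))$ and no other, and this is exactly what Lemma \ref{hlm} delivers once the abelian rewriting has forced the base points to coincide; one must also check at each step that $g_i = h_i$ genuinely lies in $\widehat G$ and that the commutation holds uniformly in $i$.
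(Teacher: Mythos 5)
Your proof is correct, and it uses the same two ingredients as the paper (Lemma \ref{hlm} plus commutativity), but the chain transformation is genuinely different. The paper fixes a generator $g$, starts from a $(\dl, g\circ h)$-chain for the \emph{composite} parameter $g\circ h$, keeps all the interior points $x_2,\dots,x_n$ unchanged, and repairs only the two endpoints by rewriting the first map via commutativity and post-composing the last map with $g$; the resulting chain is $(g(x),x_2,\dots,x_n,g(x);h_1,h_2\circ g,\dots,g\circ h_n\circ g)$. You instead take a $(\dl,g)$-chain for the \emph{same} parameter $g$ and push every point forward by $h$, keeping the connecting maps $h_i$ untouched; commutativity is used uniformly at every link to identify $h_i\circ g\circ h(y_i)$ with $h(h_i\circ g(y_i))$, after which Lemma \ref{hlm} (applied to $h$) gives exactly the estimate with the correct base point. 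Your version buys two things: it is more symmetric (one argument covers every link, with no special treatment of the first and last steps), and it establishes invariance under an arbitrary $h\in G$ directly, whereas the paper argues only for a generator and implicitly relies on iterating that fact to cover all of $G$. The paper's version, in exchange, leaves the interior points of the chain literally unchanged, which some readers may find easier to visualize. Your closing caveats are the right ones and both check out: each $g_i=h_i$ does lie in $\widehat G$, and the commutation $h_i\circ g\circ h=h\circ h_i\circ g$ holds for all $i$, the case $h_i=\mathrm{id}$ being trivial.
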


\begin{proof}
Let $x \in {\sf{CR}}(G)$ and $g$ be any generator of $G$.  Let $\e  \in P(X)$ be given and some $h \in G$. We shall construct an $(\e,h)$-chain from $g(x)$ to itself and it will follow that $g(x)\in {\sf{CR}}(G)$.

Since $g$ is continuous, by Lemma \ref{hlm}, there exists a $\dl \in P(X)$ such that $\dl \leq \e$ and  if $d(x,y)< \dl(x) \ {\rm then}\  d(g(x),g(y))$  $< \e(g(x)).$  Also $x \in {\sf{CR}}(G)$ implies there is a $(\dl, g\circ h)$-chain $(x=x_1,\dots , x_{n+1}=x;h_1, \dots ,h_n)$ from $x$ to itself. That is, for each $i=1, \dots ,n$, we have,
$$d(h_i \circ g \circ h(x_i),x_{i+1})<\dl(h_i \circ g \circ h(x_i)) \leq \e(h_i \circ g \circ h(x_i)).$$

In particular, as $G$ is abelian, we have,
\begin{equation} \notag
\begin{split}
d(h_1 \circ h \circ g(x_1),x_2)& =d(h_1 \circ g \circ h(x_1),x_2)\\
& <\dl(h_1 \circ g \circ h(x_1))\\
& \leq \e(h_1 \circ g \circ h(x_1)).
\end{split}
\end{equation}

Also, by the continuity of $g$, we have, 
$d(h_n \circ g \circ h(x_n),x)<\dl(h_n \circ g \circ h(x_n))$  implies $d(g \circ h_n \circ g \circ h(x_n),g(x))<\e(g \circ h_n \circ g \circ h(x_n)).$

Hence, $(g(x),x_2,\dots , x_n,g(x);h_1,h_2 \circ g, \dots,h_{n-1}\circ g ,g \circ h_n\circ g)$ is  an $(\e,h)$-chain from $g(x)$ to itself.
\end{proof}

\begin{theorem}
The set of all chain recurrent points for $G$ is a closed subset of $X$.
\end{theorem}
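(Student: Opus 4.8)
The plan is to show that the complement $X \setminus {\sf{CR}}(G)$ is open; equivalently, that ${\sf{CR}}(G)$ contains all of its limit points. Since $(X,d)$ is a metric space, it suffices to argue with sequences. So I would take a point $x$ that is the limit of a sequence $(x_k)$ with each $x_k \in {\sf{CR}}(G)$ and $x_k \to x$, fix an arbitrary $\e \in P(X)$ and $g \in G$, and manufacture an $(\e,g)$-chain from $x$ to itself; producing such a chain for every $\e$ and every $g$ is exactly what is required to conclude $x \in {\sf{CR}}(G)$.

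The construction I have in mind is chain surgery at the endpoints, in the spirit of the surgery used in the preceding proof. Fix $k$ large so that $x_k$ is very close to $x$. Since $x_k \in {\sf{CR}}(G)$, for a suitably chosen $\dl \in P(X)$ with $\dl \le \e/2$ there is a $(\dl,g)$-chain $(x_k = y_1, y_2, \dots, y_{m+1} = x_k; h_1, \dots, h_m)$ from $x_k$ to itself. I then replace the two endpoints $y_1$ and $y_{m+1}$ by $x$, keeping the interior points $y_2, \dots, y_m$ and all the maps $h_1, \dots, h_m$ unchanged, and claim that $(x, y_2, \dots, y_m, x; h_1, \dots, h_m)$ is an $(\e,g)$-chain from $x$ to itself. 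The interior inequalities $d(h_i \circ g(y_i), y_{i+1}) < \e(h_i\circ g(y_i))$ for $2 \le i \le m-1$ are untouched and hold because $\dl \le \e$. Only the first inequality (whose source has changed from $x_k$ to $x$) and the last inequality (whose target has changed from $x_k$ to $x$) must be re-verified.

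For the last step the comparison is the easier one: writing $p = h_m \circ g(y_m)$, we already have $d(p, x_k) < \dl(p)$, and the triangle inequality gives $d(p, x) \le d(p, x_k) + d(x_k, x)$, so it is enough that $x_k$ be close to $x$ and that $\dl$ be small at $p$ relative to $\e(p)$; here I would invoke Lemma \ref{mhlm} to pass from $\dl(p)$ to a definite fraction of $\e(p)$. For the first step we must bound $d(h_1 \circ g(x), y_2)$ by $\e(h_1 \circ g(x))$, starting from $d(h_1\circ g(x_k), y_2) < \dl(h_1 \circ g(x_k))$. Using the continuity of $g$ through Lemma \ref{hlm}, together with the continuity of $h_1$ and the comparison Lemma \ref{mhlm} for $\e$, one would push $d(h_1\circ g(x), y_2)$ below $\e(h_1 \circ g(x))$ provided $x_k$ is taken sufficiently near $x$.

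The delicate point, and the one I expect to be the main obstacle, is precisely this first-step estimate. Unlike the single-map setting, the map governing the first step is the composite $h_1 \circ g$: the fixed factor $g$ can be handled uniformly by Lemma \ref{hlm}, but the factor $h_1 \in \widehat G$ varies with the chain and carries its own, a priori uncontrolled, modulus of continuity, so $d(h_1 \circ g(x), h_1 \circ g(x_k))$ cannot be bounded by the closeness of $x_k$ to $x$ alone, uniformly over all admissible chains. The crux is therefore to sequence the choices carefully, first $\e$ and $g$, then the approximating point $x_k$ together with the tolerance $\dl$, and only then the chain, so that the first and last comparisons can be met simultaneously. This is where the function-valued tolerances and Lemmas \ref{hlm} and \ref{mhlm} do the real work, and where the noncompactness of $X$ makes the argument genuinely more subtle than in Conley's compact theory.
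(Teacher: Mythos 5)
Your overall strategy (show the complement is open by splicing a chain through a nearby chain recurrent point) is the right one, and you have correctly located the obstruction: in the first link the map $h_1\circ g$ involves a factor $h_1\in\widehat G$ that is part of the chain, hence chosen \emph{after} the approximating point and the tolerance, and whose modulus of continuity is controlled by nothing you have fixed. But your proposal does not overcome this obstruction; it only names it. ``Sequencing the choices carefully'' cannot work, because any admissible order must place the choice of the chain (and hence of $h_1$) last, after $x_k$ and $\dl$ are fixed, and no choice of $x_k$ close to $x$ can bound $d(h_1\circ g(x),h_1\circ g(x_k))$ uniformly over all $h_1\in\widehat G$. This is a genuine gap: the endpoint-replacement surgery you propose fails at the first link, and Lemmas \ref{hlm} and \ref{mhlm} as stated do not rescue it.

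The paper's proof avoids the problem by never touching the first link of the interior chain. Instead it prepends a fresh trivial link: choosing $x\in{\sf CR}(G)$ with $d(x,a)<\dl(a)$ and applying Lemma \ref{hlm} to the single fixed map $g$, one gets that $(a,g(x);\,identity)$ is an $(\e,g)$-chain from $a$ to $g(x)$ --- only the continuity of $g$ at $a$ is needed, not that of any $h_1$. To make the remaining chain start at $g(x)$ rather than at $x$, the paper takes a $(\dl,g^2)$-chain from $x$ to itself (note the square) and reindexes it as an $(\e,g)$-chain $(g(x),x_2,\dots,x_n;\,g_1,g_2g,\dots,g_{n-1}g)$ from $g(x)$ to $x_n$, absorbing one factor of $g$ into the new starting point. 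The final link $(x_n,a;\,g_ng)$ is then handled by the triangle inequality together with Lemma \ref{mhlm}, essentially as in your last-step estimate. If you want to repair your write-up, the two ingredients to add are the auxiliary identity link at the start and the use of a chain for $g^2$ rather than for $g$.
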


\begin{proof}
Suppose $a\in X$ is a limit point of ${\sf{CR}}(G)$. Let $\e\in P(X)$ and a $g\in G$. Since $g$ is continuous, by Lemma \ref{hlm}, there exists a $\dl_1 \in P(X)$ such that  if $d(x,a)< \dl_1(a) \ {\rm then}\  d(g(x),g(a))$  $< \e(g(a))$.

Also, by Lemma \ref{mhlm}, we have, for $\frac{\e}{2}\in P(X)$ there is an $\e\rq{}\in P(X)$ with $\e\rq{} <\frac{1}{2}\frac{\e}{2}$ and satisfies $\e(y)/2>\e(x)/4$ whenever $d(y,x)<\e\rq{}(x)$. Again, by Lemma \ref{mhlm}, we have, for $\e\rq{}\in P(X)$ there is a $\dl_2 \in P(X)$ with $\dl_2 <\frac{\e\rq{}}{2}$ and satisfies $\e\rq{}(y)/>\e\rq{}(x)/2$ whenever $d(y,x)<\dl_2(x)$.

Define a function $\dl \in P(X)$ as $\dl(x)=\min\{\dl_1(x), \dl_2(x)\}$. Since $a$ is a limit point of ${\sf{CR}}(G)$, there is $x\in {\sf{CR}}(G)$ such that $d(x,a)< \dl(a)$. Then $d(g(x),g(a))<\e(g(a))$ and hence $(a,g(x); identity)$ is an $(\e,g)$-chain from $a$ to $g(x)$.

Now, for $x\in {\sf{CR}}(G)$, there is a $(\dl,g^2)$-chain $(x=x_1,\dots, x_{n+1}=x;g_1, \dots, g_n)$ from $x$ to itself. Then $(g(x),x_2, \ldots, x_n;g_1,g_2g, \dots, g_{n-1}g)$ is an $(\e,g)$-chain from $g(x)$ to $x_n$. In particular, we have,
\begin{equation} \notag
\begin{split}
d(g_ng^2(x_n),x)&<\dl(g_ng^2(x_n))\\
&\leq  \dl_2(g_ng^2(x_n))\\
&<  \frac{\e\rq{}}{2}(g_ng^2(x_n))\\
&<  \e\rq{}(x)\\
&<\frac{\e}{2}(g_ng^2(x_n)).
\end{split}
\end{equation}

Since
\begin{equation}\notag
\begin{split}
d(g_ng^2(x_n),a)&\leq d(g_ng^2(x_n),x)+ d(x,a)\\
&<  \dl_2(g_ng^2(x_n))+\dl(a)\\
&<  \e\rq{}(x)+ \e\rq{}(x)\\
&<\e(g_ng^2(x_n)),
\end{split}
\end{equation}

we have, $(x_n,a;g_ng)$ is an $(\e,g)$-chain from $x_n$ to $a$.
By transitivity via concatenating these $(\e,g)$-chains, there is an $(\e,g)$-chain from $a$ to itself.
\end{proof}

\begin{theorem}
Let $(X,G)$ and $(Y,\ti{G})$ be two dynamical systems on the metric spaces $(X,d_X)$ and $(Y,d_Y)$. If a homeomorphism $\rho : X \to Y$ is a topological conjugacy then $\rho({\sf{CR}}(G))={\sf{CR}}(\ti{G})$.
\end{theorem}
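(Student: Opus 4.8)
The plan is to prove the two inclusions $\rho({\sf{CR}}(G)) \subseteq {\sf{CR}}(\ti{G})$ and ${\sf{CR}}(\ti{G}) \subseteq \rho({\sf{CR}}(G))$. Since $\rho^{-1}$ is also a topological conjugacy, now between $(Y,\ti{G})$ and $(X,G)$, it suffices to establish the first inclusion and then apply the symmetric argument to $\rho^{-1}$. The conjugacy furnishes a correspondence $g \leftrightarrow \ti{g}$ between the elements of $G$ and $\ti{G}$, induced by a correspondence of the generators, satisfying $\rho \circ g = \ti{g} \circ \rho$, with the identity corresponding to the identity; I will use this intertwining relation to transport chains from $X$ to $Y$.

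Fix $x \in {\sf{CR}}(G)$ and put $y = \rho(x)$. To show $y \in {\sf{CR}}(\ti{G})$, let an arbitrary $\ti{\e} \in P(Y)$ and $\ti{g} \in \ti{G}$ be given, and let $g \in G$ be the element with $\rho \circ g = \ti{g} \circ \rho$. The key step is to produce, from $\ti{\e}$, a positive continuous error function on $X$ against which the chain recurrence of $x$ can be invoked. Applying Lemma \ref{hlm} to the continuous map $\rho: X \to Y$ and the function $\ti{\e} \in P(Y)$, I obtain $\dl \in P(X)$ such that $d_Y(\rho(u),\rho(v)) < \ti{\e}(\rho(u))$ whenever $d_X(u,v) < \dl(u)$.

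Since $x \in {\sf{CR}}(G)$, there is a $(\dl,g)$-chain $(x=x_1,\dots,x_{n+1}=x;g_1,\dots,g_n)$ from $x$ to itself, so that $d_X(g_i \circ g(x_i),x_{i+1}) < \dl(g_i \circ g(x_i))$ for each $i$. I then push this chain forward by setting $y_i = \rho(x_i)$ and letting $\ti{g}_i$ be the element of $\widehat{\ti{G}}$ corresponding to $g_i$ under the conjugacy. The intertwining relations give $\rho(g_i \circ g(x_i)) = \ti{g}_i \circ \ti{g}(y_i)$, and feeding the chain inequality into the defining property of $\dl$ yields $d_Y(\ti{g}_i \circ \ti{g}(y_i), y_{i+1}) < \ti{\e}(\ti{g}_i \circ \ti{g}(y_i))$ for each $i$. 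Hence $(y=y_1,\dots,y_{n+1}=y;\ti{g}_1,\dots,\ti{g}_n)$ is a $(\ti{\e},\ti{g})$-chain from $y$ to itself, which proves $y \in {\sf{CR}}(\ti{G})$.

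I expect the main obstacle to be the absence of compactness: one cannot rely on uniform continuity of $\rho$ to convert the $X$-side estimate $d_X(g_i \circ g(x_i),x_{i+1}) < \dl(\cdot)$ into the $Y$-side estimate with error measured by $\ti{\e}$ evaluated at the image base point. This is precisely the technical gap that Lemma \ref{hlm} is designed to bridge, so the delicate point is to select $\dl$ via Lemma \ref{hlm} \emph{before} invoking the chain recurrence of $x$, rather than trying to correct the estimate afterward. A secondary point requiring care is the bookkeeping of the correspondence $g_i \mapsto \ti{g}_i$ on the enlarged index set $\widehat{G}$, which includes the identity, so that the pushed-forward data is a legitimate $(\ti{\e},\ti{g})$-chain for $\ti{G}$.
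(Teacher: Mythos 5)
Your proposal is correct and follows essentially the same route as the paper: apply Lemma \ref{hlm} to $\rho$ and the given $\ti{\e}\in P(Y)$ to obtain $\dl\in P(X)$ \emph{before} invoking chain recurrence of $x$, push a $(\dl,g)$-chain forward through the intertwining relations $\rho\circ g_i=\ti g_i\circ\rho$, and obtain the reverse inclusion by applying the same argument to the conjugacy $\rho^{-1}$. No substantive differences to report.
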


\begin{proof}
Let $x \in {\sf{CR}}(G)$.  Let $\e \in P(Y)$ be given and some $\ti g \in \ti G$. First we shall construct an $(\e,\ti g)$-chain from $y=\rho(x)$ to itself. Let $g \in G$ be such that $\rho \circ g=\ti g \circ \rho$.

Since $\rho : X \to Y$ is continuous, by Lemma \ref{hlm}, there exists a $\dl \in P(X)$ such that if $d_X(x_1,x_2)< \dl(x_1)$ then $d_Y(\rho(x_1),\rho(x_2))< \e(\rho(x_1))$, for $x_1,x_2 \in X$.  Also $x \in {\sf{CR}}(G)$ implies there is a $(\dl, g)$-chain $(x=x_1,\dots , x_{n+1}=x;g_1, \dots ,g_n)$ from $x$ to itself. That is, for each $i=1, \dots ,n$, we have,
$$d_X(g_i\circ g(x_i),x_{i+1})<\dl(g_i\circ g(x_i)) .$$

For  each $i=1, \dots ,n+1$, let $y_i=\rho(x_i)$ and $\rho \circ g_i=\ti g_i \circ \rho$. Since 
$$d_X(g_i\circ g(x_i),x_{i+1})<\dl(g_i\circ g(x_i)) ,$$
we have,
\begin{equation} \notag
\begin{split}
d_Y(\ti g_i\circ \ti g(y_i),y_{i+1})&=d_Y(\rho \circ g_i\circ g(x_i),\rho(x_{i+1}))\\
& <\e(\rho \circ g_i\circ g(x_i))\\
& = \e(\ti g_i\circ \ti g(y_i)).
\end{split}
\end{equation}

Thus $(\rho(x)=y_1,\dots , y_{n+1}=\rho(x); \ti g_1, \dots ,\ti g_n)$ is an $(\e,\ti g)$-chain  from $\rho(x)$ to itself and hence $\rho(x)\in {\sf{CR}}(\ti{G})$.  Therefore $\rho({\sf{CR}}(G)) \subset {\sf{CR}}(\ti{G})$.

Conversely, let $y \in {\sf{CR}}(\ti{G})$. Since $\rho^{-1}$ is a conjugacy from $Y$ to $X$, we have, $y \in \rho({\sf{CR}}(G))$. Thus, $\rho({\sf{CR}}(G))={\sf{CR}}(\ti{G})$.
\end{proof}

\section{Conley\rq{}s Theorem for semigroup of Continuous Maps}

This section consists of systematic  investigation to reproduce the concept of  attractors for semigroup and provide the alternative definition of a chain recurrent set. In \cite {kl2}, we have introduced the notion of an attractor for the semigroup of continuous maps as follows:
\begin{definition} \label {tr}
A nonempty open subset $U$ of $X$ is said to be a {\it trapping region} for $G$ if  there exists an $h\in G$ such that
  for the set
$$\widetilde U := \{fh(x): x\in U\ {\rm and \ } f \in \widehat G \}$$

we have,  $cl(\widetilde U) \subset U$.
\end{definition}

\begin{definition} \label {at}
The {\it attractor} for $G$ determined by  a {\it trapping region} $U$ for $G$ is defined by
\begin{equation} \notag
\begin{split}
A:=&\{x\in X : {\rm for\ every\ open \ subset}\ V\ {\rm of}\  X\ {\rm containing}\  x,\ V \cap f_{k}(h(U))\ne \emptyset \\  
&\ {\rm for\  infinitely\  many}\  k \in \N, {\rm where}\ (f_{k})_k\ {\rm is\ some}\ {\rm unbounded\ sequence\  in\  } G \},
\end{split}
\end{equation}
where, $h\in G$ is as in the Definition \ref {tr}.
\end{definition}

The notion of unbounded sequence for the continuous semigroup was introduced in \cite {kl} as follows:
A sequence of functions $(f_{k})_{k\in \N}$ in $G=\langle g_{\al}\rangle _{\al}$ is said to be {\it{unbounded}} if there is 
\begin{enumerate}
\item a sequence $(n_k)_k$ of natural numbers with $n_k\to \ity$ as $k\to \ity$; and
\item  a  generator $g_{\al_{_0}}\in \{g_{\al}\}_{\al}$ such that  each $f_{k}$ consists of exactly $n_k$ iterates of  $g_{\al_{_0}}$,  that is, $f_{k}=h_{n_k+1}\circ g_{\al_{_0}}\circ h_{n_k} \circ g_{\al_{_0}} \circ h_{n_k-1} \circ \dots \circ h_2\circ g_{\al_{_0}} \circ h_1$, where each $h_i \in \widehat G= G\cup \{identity\}$ and the functions $h_i$ are independent of $g_{\al_{_0}}$.
\end{enumerate}

Note that the unboundedness of a sequence in $G$ is not with respect to some metric on $G$. The term unbounded refers to the unboundedness of the sequence $(n_k)$, the number of iterates of a generator of $G$. Recall that in case of a discrete or continuous dynamical system $(X, T, \Phi)$, any unbounded sequence corresponds to the iterates of $\Phi^1$.

\begin{definition}
The {\it basin of attraction} of an attractor $A$ for $G$ determined by  a  trapping region $U$  is defined by
$$B(A):=\{x \in X : f(x) \in U {\rm \ for\  some\ } f \in G \}.$$
\end{definition}

It is clear that  $B(A)$ contains $U$. 

The following lemma has been given by Hurley in \cite {mh}.

\begin{lemma}\label{mhlm}
If $\e \in P(X)$ there is an $\dl \in P(X)$ with $\dl <\e/2$ and satisfying $\e(y)>\e(x)/2$ whenever $d(y,x)<\dl(x)$.
\end{lemma}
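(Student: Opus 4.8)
The plan is to build $\dl$ by hand as the pointwise minimum of two ingredients: a crude multiple of $\e$ whose only job is to force the inequality $\dl<\e/2$, and a genuinely geometric radius function that encodes the required control of $\e$ on small balls. Concretely, I would introduce
\[ S(x):=\sup\{\,r>0 : \e(y)>\tfrac12\e(z)\ \text{for all}\ y,z\in \overline{B}(x,r)\,\}, \]
and then set $\dl(x):=\min\{\tfrac14\e(x),\,S(x)\}$. With this definition the bound $\dl<\e/2$ is immediate, since $\dl(x)\le \tfrac14\e(x)<\tfrac12\e(x)$, so the entire content of the lemma is reduced to showing that $S$ is strictly positive and continuous and that $\dl$ has the stated ball property.

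For positivity I would use continuity of $\e$ at $x$ to choose $r>0$ with $\e(w)\in(\tfrac23\e(x),\tfrac43\e(x))$ for every $w\in \overline{B}(x,r)$; then for any $y,z\in\overline{B}(x,r)$ one has $\e(y)>\tfrac23\e(x)$ and $\tfrac12\e(z)<\tfrac23\e(x)$, whence $\e(y)>\tfrac12\e(z)$, so that $S(x)\ge r>0$ and hence $\dl(x)>0$. The ball property is then read straight off the definition: the defining inequality persists on $\overline{B}(x,r)$ for every $r<S(x)$, because a smaller closed ball inherits it from a larger one. So if $d(y,x)<\dl(x)\le S(x)$, I pick any $r$ with $d(y,x)\le r<S(x)$ and apply the defining inequality on $\overline{B}(x,r)$ to the pair $(y,x)$, obtaining $\e(y)>\tfrac12\e(x)$, which is exactly $\e(y)>\e(x)/2$.

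The one real obstacle is the continuity of $\dl$, and this is precisely where the geometric definition of $S$ earns its keep. A construction that merely selects, at each $x$, some admissible radius supplied by the continuity of $\e$ need not vary continuously, and smoothing $\e$ itself (for instance by an inf-convolution with the metric) fails, because such a smoothing tracks the size of $\e$ at nearby points rather than the distance over which $\e$ can drop to half its value. I would instead show that $S$ is $1$-Lipschitz directly from the triangle inequality: writing $\rho:=d(x,x')$, for any $r<S(x)$ one has $\overline{B}(x',\,r-\rho)\subseteq \overline{B}(x,r)$, so the defining inequality is inherited on the smaller ball and $S(x')\ge r-\rho$; letting $r\uparrow S(x)$ and symmetrising in $x,x'$ gives $|S(x)-S(x')|\le d(x,x')$. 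Thus $S$ is continuous (with values in $(0,\ity]$), the factor $\tfrac14\e$ is continuous and positive, and their minimum $\dl$ is a continuous, strictly positive function satisfying both required conditions, which completes the verification.
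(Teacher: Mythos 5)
Your argument is correct, but note that the paper itself offers no proof of this lemma to compare against: it is quoted verbatim from Hurley with the remark that it ``has been proved by Hurley in \cite{mh},'' so your write-up is in effect supplying a proof the paper omits. The construction is sound and self-contained. The one genuinely delicate point is continuity of the radius function, and you have isolated it correctly: defining $S(x)$ by the \emph{symmetric two-point} condition ($\e(y)>\tfrac12\e(z)$ for all $y,z$ in the closed ball) rather than the one-point condition centred at $x$ is exactly what makes the inclusion $\overline{B}(x',r-\rho)\subseteq\overline{B}(x,r)$ transfer admissibility from $x$ to $x'$ and yields the $1$-Lipschitz bound; the naive centred version would fail there because the reference value $\e(x)/2$ changes with the centre. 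Two trivial edge cases are worth a sentence if you write this out formally: when $r\le\rho$ the inequality $S(x')\ge r-\rho$ is vacuous (so the Lipschitz estimate still goes through), and the Lipschitz argument shows $S$ is either finite everywhere or identically $+\ity$, so $\dl=\min\{\tfrac14\e,S\}$ is continuous in either case. Neither affects the validity of the proof.
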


We notice the following characteristics of attractors in \cite {kl2}

\begin{proposition}
The attractor $A$ determined by a trapping region $U$ is invariant under $G$.
\end{proposition}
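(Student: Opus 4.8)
The plan is to prove the forward invariance $g(A)\subseteq A$ for every generator $g$ of $G$; since every element of $G$ is a finite composition of generators, iterating this inclusion then gives $g(A)\subseteq A$ for all $g\in G$, which is the asserted invariance (and parallels the way invariance of ${\sf{CR}}(G)$ was handled earlier, where only the inclusion $g({\sf{CR}}(G))\subseteq{\sf{CR}}(G)$ was established). First I would fix $x\in A$ and a generator $g$, and let $V$ be an arbitrary open subset of $X$ with $g(x)\in V$. By continuity of $g$ there is an open set $W$ with $x\in W$ and $g(W)\subseteq V$. Since $x\in A$ and $W$ is an open neighbourhood of $x$, Definition \ref{at} provides an unbounded sequence $(f_k)_k$ in $G$ for which $W\cap f_k(h(U))\ne\emptyset$ for infinitely many $k\in\N$.

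For each such $k$ pick $p_k\in W\cap f_k(h(U))$; then on one hand $g(p_k)\in g(W)\subseteq V$, and on the other hand $g(p_k)\in g\big(f_k(h(U))\big)=(g\circ f_k)(h(U))$. Hence $V\cap(g\circ f_k)(h(U))\ne\emptyset$ for infinitely many $k$. It therefore remains to verify that the left-composed sequence $(g\circ f_k)_k$ is itself unbounded in the precise sense of the definition, and this bookkeeping is the step I expect to be the main obstacle. Write $f_k=h_{n_k+1}\circ g_{\al_0}\circ h_{n_k}\circ\cdots\circ g_{\al_0}\circ h_1$, with $n_k\to\ity$ and each $h_i\in\widehat G$ independent of the fixed generator $g_{\al_0}$. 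Decompose the generator $g$ into generators as $g=q_{m+1}\circ g_{\al_0}\circ q_m\circ\cdots\circ g_{\al_0}\circ q_1$, where $m\ge 0$ is the fixed, $k$-independent number of occurrences of $g_{\al_0}$ in $g$ and each $q_j$ is independent of $g_{\al_0}$. Then
$$g\circ f_k=q_{m+1}\circ g_{\al_0}\circ\cdots\circ g_{\al_0}\circ(q_1\circ h_{n_k+1})\circ g_{\al_0}\circ h_{n_k}\circ\cdots\circ g_{\al_0}\circ h_1,$$
and since $q_1\circ h_{n_k+1}$ is again independent of $g_{\al_0}$, this exhibits $g\circ f_k$ as a composition of exactly $n_k+m$ iterates of $g_{\al_0}$ separated by factors independent of $g_{\al_0}$. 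As $n_k+m\to\ity$, the sequence $(g\circ f_k)_k$ meets both requirements for unboundedness, with the same distinguished generator $g_{\al_0}$.

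Combining the two observations, every open $V\ni g(x)$ meets $(g\circ f_k)(h(U))$ for infinitely many $k$ along the unbounded sequence $(g\circ f_k)_k$, so $g(x)\in A$ by Definition \ref{at}. Since $x\in A$ and the generator $g$ were arbitrary, $g(A)\subseteq A$ for every generator, and hence for every $g\in G$ by writing it as a composition of generators and applying the inclusion repeatedly. The only genuinely delicate point is the unboundedness check above: one must confirm that left-composition with $g$ merely shifts the iterate count $n_k$ by the fixed constant $m$ and leaves the intervening factors independent of $g_{\al_0}$, so that the condition $n_k\to\ity$ is preserved. I note that the argument is insensitive to whether Definition \ref{at} is read as supplying one unbounded sequence valid for all neighbourhoods of $x$ or a separate sequence for each neighbourhood, since in either case the transported sequence $(g\circ f_k)_k$ serves the corresponding neighbourhoods of $g(x)$.
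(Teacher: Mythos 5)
Your argument is correct as far as it goes, but note first that this paper does not actually prove the proposition: it is listed among properties "noticed in \cite{kl2}" and stated without proof, so there is no in-paper argument to compare against. On its own merits, your proof of the inclusion $g(A)\subseteq A$ is sound: the transport of a point of $W\cap f_k(h(U))$ to $V\cap (g\circ f_k)(h(U))$ via $g(W)\subseteq V$ is exactly the right move, and your bookkeeping for the unboundedness of $(g\circ f_k)_k$ is valid --- left-composing with a fixed $g$ adds a constant number $m$ of occurrences of the distinguished generator $g_{\al_0}$ and the factor $q_1\circ h_{n_k+1}$ remains in $\widehat G$ and independent of $g_{\al_0}$, so the iterate counts $n_k+m$ still tend to infinity. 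Your remark about the quantifier ambiguity in Definition \ref{at} is also apt and correctly resolved. The one substantive caveat is that you establish only \emph{forward} invariance $g(A)\subseteq A$; this matches how the paper itself treats invariance of ${\sf{CR}}(G)$ (where likewise only $g(x)\in{\sf{CR}}(G)$ is shown), but for attractors in the classical single-map setting "invariant" usually means $f(A)=A$, and if that is the intended reading here the reverse inclusion $A\subseteq g(A)$ would still need an argument (and may genuinely require more, e.g.\ some compactness of ${\sf{cl}}(\widetilde U)$, since in this noncompact setting the definition of $A$ does not obviously supply preimages within $A$). You should either state explicitly that you are proving forward invariance, or supply the missing inclusion.
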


\begin{proposition}
The attractor $A$ determined by a trapping region $U$ is a closed set.
\end{proposition}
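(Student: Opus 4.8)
The plan is to establish that $A$ coincides with its closure by proving the inclusion $cl(A) \subseteq A$, the reverse inclusion being automatic. The guiding observation is that the defining property of $A$ is formulated entirely in terms of \emph{open} neighborhoods, and such neighborhood-based conditions propagate from a point to the points in its closure. So the whole argument reduces to a careful unwinding of the membership criterion.

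Concretely, I would begin by fixing an arbitrary point $x \in cl(A)$ together with an arbitrary open set $V$ with $x \in V$. To show $x \in A$, I must produce, for this $V$, an unbounded sequence $(f_k)_k$ in $G$ for which $V \cap f_k(h(U)) \neq \emptyset$ holds for infinitely many $k$, where $h \in G$ is the map fixed by the trapping region $U$. Since $x$ lies in $cl(A)$ and $V$ is an open neighborhood of $x$, we have $V \cap A \neq \emptyset$, so I may choose a point $a \in V \cap A$. The key move is now to regard $V$ not as a neighborhood of $x$ but as an open neighborhood of $a$: because $a \in A$, the definition of the attractor applied to $a$ with this very neighborhood $V$ furnishes an unbounded sequence $(f_k)_k$ in $G$ satisfying $V \cap f_k(h(U)) \neq \emptyset$ for infinitely many $k$. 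This is precisely the condition required of $x$ relative to $V$. As $V$ was an arbitrary open set containing $x$, it follows that $x \in A$, and hence $cl(A) \subseteq A$, so $A$ is closed.

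The only point demanding care is the nesting of quantifiers: the unbounded sequence in the definition of $A$ is permitted to depend on the chosen neighborhood. The argument respects this by fixing $V$ first and only then extracting the sequence from the membership $a \in A$ for that same $V$, so the dependence is harmless. Beyond this, one should note that the map $h$ is determined once and for all by $U$ and does not vary with $x$, $a$, or $V$, so the sets $f_k(h(U))$ appearing for $x$ and for $a$ are literally the same family. I expect this to be the entire difficulty; the remainder is routine verification and no use of compactness or of the metric is needed, since the closedness is a purely topological consequence of the neighborhood formulation.
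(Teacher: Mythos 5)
Your proof is correct. The paper itself does not prove this proposition (it is quoted from the reference \cite{kl2}), but your argument is the natural one: since membership in $A$ is the condition ``every open neighborhood $V$ satisfies $\mathcal{P}(V)$'' for a property $\mathcal{P}$ depending only on $V$ (and on the fixed data $U$, $h$), any point of $cl(A)$ inherits the condition from a point of $A$ lying in the same neighborhood. You correctly identify and resolve the only delicate point, namely that the unbounded sequence is quantified inside the choice of $V$; this reading is consistent with how the paper itself applies the definition in the proofs of Proposition 3.3 and Theorem 3.4, where a sequence is extracted separately for each chosen neighborhood.
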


\begin{proposition} \label {atp1}
Let $A$ be the attractor determined by a trapping region $U$. Then $A$ is contained in $U$.
\end{proposition}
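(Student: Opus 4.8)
The plan is to show that every set of the form $f_k(h(U))$ occurring in the definition of $A$ is already contained in $\widetilde U$, and then to pass to closures using the trapping condition $cl(\widetilde U)\subset U$. The whole argument is essentially an unwinding of the definitions.

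First I would record the elementary set inclusion $f_k(h(U))\subset \widetilde U$ for every $k$. Indeed, the members of an unbounded sequence $(f_k)_k$ lie in $G\subset \widehat G$, so for any $x\in U$ the point $f_k(h(x))=f_k h(x)$ is of the form $f h(x)$ with $f=f_k\in\widehat G$ and $x\in U$; by the very definition of $\widetilde U$ in Definition \ref{tr} this point belongs to $\widetilde U$. Hence $f_k(h(U))\subset\widetilde U$ for all $k$, independently of any unboundedness.

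Next, take an arbitrary $x\in A$ together with an associated unbounded sequence $(f_k)_k$ witnessing its membership. Let $V$ be any open neighborhood of $x$. By the definition of $A$, $V\cap f_k(h(U))\neq\emptyset$ for infinitely many $k$; choosing any one such index and using the inclusion above gives $V\cap\widetilde U\supset V\cap f_k(h(U))\neq\emptyset$. Since $V$ was an arbitrary open neighborhood of $x$, this shows $x\in cl(\widetilde U)$.

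Finally, I would invoke the trapping region condition $cl(\widetilde U)\subset U$ to conclude that $x\in U$; as $x\in A$ was arbitrary, this yields $A\subset U$. I do not anticipate a genuine obstacle here, since the claim is a direct consequence of the definitions. The only point requiring a moment's care is the observation that the terms $f_k$ of the unbounded sequence indeed lie in $\widehat G$, so that each $f_k(h(U))$ sits inside $\widetilde U$; notably the argument never uses the unboundedness of $(f_k)_k$ and would succeed for an arbitrary sequence in $G$.
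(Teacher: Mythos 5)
Your proof is correct, and it is genuinely more direct than the one in the paper. Both arguments hinge on the same elementary inclusion $f_k(h(U))\subset\widetilde U$, but they diverge afterwards: you observe that the defining property of $A$ already says that \emph{every} open neighborhood of a point $x\in A$ meets some $f_k(h(U))\subset\widetilde U$, which gives $x\in \mathrm{cl}(\widetilde U)$ outright, and then the trapping condition $\mathrm{cl}(\widetilde U)\subset U$ finishes the job. The paper instead introduces the positive continuous function $\eta(x)=\frac{1}{2}\bigl(d(x,\mathrm{cl}(\widetilde U))+d(x,X\setminus U)\bigr)$, invokes Lemma \ref{mhlm} to produce a $\delta\in P(X)$ with $\delta<\eta/2$ and $\eta(y)>\eta(x)/2$ for $d(y,x)<\delta(x)$, and only tests the single neighborhood $B(x,\delta(x))$: finding $y\in B(x,\delta(x))\cap\mathrm{cl}(\widetilde U)$ with $d(x,y)<\eta(y)$ then forces $x\in U$. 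Your route proves the slightly stronger statement $A\subset\mathrm{cl}(\widetilde U)$ and needs no metric machinery at all (indeed it works in any topological space), whereas the paper's detour through $\eta$ and Lemma \ref{mhlm} is not needed for this proposition; its real payoff is that the same $\eta$-construction is reused in the proof of Theorem \ref{atth}, so the authors are effectively rehearsing that device here. Your closing remark that unboundedness of $(f_k)_k$ plays no role is also accurate.
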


\begin{proof}
Since $U$ is a trapping region there exists an $h\in  G$ such that  ${\sf{cl}}(\widetilde U) \subset U$. Define $\eta \in P(X)$  by
$$\eta(x)=\frac{1}{2}\left(d(x,{\sf{cl}}(\widetilde U))+ d(x,X\setminus U)\right).$$
Thus, if $y \in {\sf{cl}}(\widetilde U)$, $z \in X$, and $d(y,z)<\eta(y)$, then $z\in U$.

Let $x \in A$. By Lemma \ref{mhlm}, there exists a $\dl \in P(X)$ corresponding to $\eta$. Then $B(x,\dl(x))\cap f_{k}(h(U)) \ne \emptyset$  for  infinitely  many  $k \in \N$,  where  $(f_{k})_k$ is some unbounded sequence  in  $ G$.  Also,
$$ f_{k}(h(U)) \subset \widetilde U \subset {\sf{cl}}(\widetilde U).$$

Therefore, $B(x,\dl(x))\cap {\sf{cl}}(\widetilde U) \ne \emptyset$. Let $y \in B(x,\dl(x))\cap {\sf{cl}}(\widetilde U)$. Since
$$d(x,y)<\dl (x) < \eta(x)/2 <\eta(y) ,$$
we have, $x\in U$.
\end{proof}

\begin{theorem}  \label{atth}
Let $(X,d)$ be a metric space and $G$ be an abelian semigroup. The chain recurrent set of $G$ is the complement of the union of sets $B(A) \setminus A$ as $A$ varies over the collection of attractors  of $G$:
$$X \setminus {\sf{CR}}(G)=\bigcup_A[B(A) \setminus A].$$
\end{theorem}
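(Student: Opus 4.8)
The plan is to prove the asserted equality by establishing the two inclusions separately. I would begin with $X\setminus{\sf{CR}}(G)\subseteq\bigcup_A[B(A)\setminus A]$, i.e.\ that a point failing to be chain recurrent lies in $B(A)\setminus A$ for some attractor $A$; with the machinery already in place this reduces to an explicit construction. The reverse inclusion $\bigcup_A[B(A)\setminus A]\subseteq X\setminus{\sf{CR}}(G)$, that no point of any $B(A)\setminus A$ is chain recurrent, is where the genuine difficulty sits, and I expect it to be the main obstacle.

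For the first inclusion, fix $x\notin{\sf{CR}}(G)$ and pick $\e\in P(X)$, $g\in G$ for which there is no $(\e,g)$-chain from $x$ to $x$. I would set
$$U:=\{y\in X:\text{there is an }(\e,g)\text{-chain from }x\text{ to }y\}$$
and claim $U$ is a trapping region with distinguished element $h=g$. Three short checks are needed. First, $U$ is open: in any chain the terminal vertex $x_{n+1}$ may be replaced by any point of the ball $B\!\left(g_n\circ g(x_n),\e(g_n\circ g(x_n))\right)$, so this whole ball lies in $U$, making every point of $U$ interior. Second, $U\neq\emptyset$ and $x\in B(A)$: the one-step chain $(x,g(x);\,\mathrm{id})$ shows $g(x)\in U$, whence $x\in B(A)$ since $g\in G$. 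Third, and crucially, ${\sf{cl}}(\ti U)\subseteq U$ with $h=g$: for $u\in U$ and $f\in\widehat G$, appending the vertex $f\circ g(u)$ with prefix $f$ to a chain ending at $u$ shows $f\circ g(u)\in U$ together with a full $\e$-ball about it; since $\e$ is continuous and strictly positive, any $w\in{\sf{cl}}(\ti U)$ satisfies $d(w,p)<\e(p)$ for some $p\in\ti U$ near $w$, forcing $w\in U$. Letting $A$ be the attractor determined by $U$, Proposition~\ref{atp1} gives $A\subseteq U$; the absence of a chain from $x$ to $x$ means $x\notin U$, hence $x\notin A$, while $x\in B(A)$ as above. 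Thus $x\in B(A)\setminus A$.

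For the reverse inclusion, fix an attractor $A$ with trapping region $U$, distinguished element $h$, and the function $\eta$ of Proposition~\ref{atp1}, and take $x\in B(A)\setminus A$, say $f_1(x)\in U$ with $f_1\in G$. The workhorse is the trapping observation with $\e=\eta$: if a vertex $x_i$ lies in $U$, then for any prefix $g_i\in\widehat G$ the abelian rewriting $g_i\circ(h\circ f_1)(x_i)=(g_i f_1)\circ h(x_i)\in\ti U\subseteq{\sf{cl}}(\ti U)$ together with the defining property of $\eta$ forces $x_{i+1}\in U$. I would therefore take the test map $g:=h\circ f_1$: for the initial vertex $x_1=x$ one has $g_1\circ g(x)=g_1\circ h(f_1(x))\in\ti U$ because $f_1(x)\in U$, so $x_2\in U$, and then every later vertex stays in $U$ by the observation. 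Hence the terminal vertex lies in $U$, and \emph{provided $x\notin U$} it cannot equal $x$; so no $(\e,g)$-chain from $x$ to $x$ exists and $x\notin{\sf{CR}}(G)$.

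The main obstacle is precisely the remaining case $x\in U\setminus A$, where the escape argument fails because $x$ already lies inside $U$. The natural remedy is to shrink the trapping region: iterating $U\mapsto{\sf{cl}}(\ti U)$ produces a nested family $U\supseteq{\sf{cl}}(\ti U)\supseteq U_2\supseteq\cdots$, and one expects $\bigcap_k U_k=A$, so that $x\notin A$ yields an index $k$ with $x\notin U_k$; taking $U^{\sharp}:=U_k$ (still a trapping region for $A$, with $x\in B(A)$ since the forward images of $x$ are driven into every neighbourhood of $A$) reduces this case to the previous paragraph. Verifying $\bigcap_k U_k=A$ — equivalently, that a point of $B(A)$ lying in no $U_k$ must be attracted to $A$ in the exact sense of the unbounded-sequence definition of the attractor — is the delicate step, and is where the abelian hypothesis and the unbounded-sequence formulation of $A$ must be reconciled. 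Once that is secured, concatenating the two inclusions yields the stated equality.
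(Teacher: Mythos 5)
Your first inclusion (a non-chain-recurrent point lies in some $B(A)\setminus A$) is essentially identical to the paper's: the same set $U$ of points reachable from $x$ by $(\e,g)$-chains, the same openness and trapping-region checks, and the same use of Proposition~\ref{atp1}. That half is fine.

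The gap is in the reverse inclusion, and it is exactly where you flagged it. Your escape argument shows that every vertex of an $(\eta, h\circ f_1)$-chain starting at $x\in B(A)$ is driven into $U$, and concludes $x\notin{\sf{CR}}(G)$ \emph{only when} $x\notin U$. For $x\in U\setminus A$ you propose to shrink the trapping region via $U\mapsto{\sf{cl}}(\ti U)$ and to invoke $\bigcap_k U_k=A$, but this is not proved, and it is genuinely problematic here: the sets ${\sf{cl}}(\ti U)$ are closed, so they are not trapping regions in the sense of Definition~\ref{tr} (which requires openness), and in a noncompact space with the paper's unbounded-sequence definition of the attractor there is no available lemma identifying $A$ with such a nested intersection. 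The paper avoids the dichotomy on whether $x\in U$ altogether by running the argument toward a different contradiction: assuming $p\in B(A)\setminus A$ is chain recurrent, it takes, for each $m$ and each member $f_k$ of an unbounded sequence, an $(\eta/m,\,\hat f_k h)$-chain from $p$ to itself with $\hat f_k=f_kh f$; your trapping observation puts all intermediate vertices in $U$, and then the \emph{last} link of the chain shows $B_{1/m}(p)\cap h_{l_k}\hat f_k h(U)\neq\emptyset$. Since $\bar f_k=h_{l_k}\hat f_k$ is again an unbounded sequence, every neighbourhood of $p$ meets $\bar f_k(h(U))$ for all $k$, which is precisely the defining property of membership in $A$ — contradicting $p\notin A$. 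In short: rather than showing the chain cannot return to $p$, one shows that if it always returns then $p\in A$. Your proof as written establishes the easy inclusion and the special case $x\in B(A)\setminus U$ of the hard one, but the case $x\in U\setminus A$ remains open under your strategy.
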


\begin{proof}
Let $A$ be an attractor of $G$ and $p\in B(A) \setminus A$. Let $U$ be a trapping region which determines $A$, and $h\in  G$ is such that  ${\sf{cl}}(\widetilde U) \subset U$. As in Proposition \ref {atp1}, there is $\eta \in P(X)$ such  that if $y \in {\sf{cl}}(\widetilde U)$, $z \in X$, and $d(y,z)<\eta(y)$, then $z\in U$. We can take $\eta$ to be bounded above by 1.

For $p\in B(A)$, we have, $f(p) \in U$ for some $f \in G$. Let $(f_{k})$ be an unbounded sequence  in $G$. Since $U$ is a trapping region $ f_{k}hf(p)\in  U.$ Let $\hat f_{k}$ denotes $f_{k}hf $. 

Now, if $p \in {\sf{CR}}(G)$, then for each $k,m \in \N$,  there exists an $(\eta/m,\hat f_{k}h)$-chain $(p=x_1, \dots , x_{l_k+1}=p;h_1, \dots ,h_{l_k})$ from $p$ to itself. Since $G$ is abelian and $\hat f_{k}(p)\in  U$, we have,
$$h_1\hat f_{k}h(x_1) \in h_1h(U) \subset  \widetilde U \subset {\sf{cl}}(\widetilde U).$$
Since 
$$d(h_1\hat f_{k}h(x_1),x_2)<\frac{1}{m}\eta(h_1\hat f_{k}h(x_1)) \le \eta(h_1\hat f_{k}h(x_1)) ,$$
we have, $x_2 \in  U$. Similarly,
$$h_2\hat f_{k}h(x_2) \in  \widetilde U \subset {\sf{cl}}(\widetilde U)$$
and $d(h_2\hat f_{k}h(x_2),x_3)<\frac{1}{m}\eta(h_2\hat f_{k}h(x_2)) \le \eta(h_2\hat f_{k}h(x_2))$ gives $x_3 \in U$. Following the previous arguments, we have $x_i \in U$ for every $i=1, \dots ,{l_k}$. Since 
$$d(h_{l_k}\hat f_{k}h(x_{l_k}),p)<\frac{1}{m}\eta(h_{l_k}\hat f_{k}h(x_{l_k})) \le \frac{1}{m},$$
we have,
$$B_{\frac{1}{m}}(p)\cap h_{l_k}\hat f_{k}h((U)) \ne \emptyset.$$

Now, the sequence of functions defined by
$$ \bar{f}_{k}= h_{l_k}\hat f_{k} \ ,$$
is an unbounded sequence in $G$. Also, for any open subset $V$ of $X$ containing $p$, there is $m \in \N$ such that $B_{\frac{1}{m}}(p) \subset V$ and   an unbounded sequence $\bar{f}_{k}$ in $G$ corresponding to the $m$. Thus,
$$V \cap \bar{f}_{k}(h(U)) \supset B_{\frac{1}{m}}(p)\cap \bar{f}_{k}(h(U)) \ne \emptyset ,$$
for all $k$. Thus $p \in A$, which is a contradiction. 
Therefore, if $p\in B(A) \setminus A$ then $p \notin {\sf{CR}}(G)$.

Conversely, let $p \notin {\sf{CR}}(G)$. Then there exists an $\e \in P(X)$ and $h_1 \in  G$ such that there is no $(\e,h_1)$-chain from $p$ to itself. Consider the set defined by
$$U:=\{x\in X:{\rm there\ is\ an}\ (\e,h_1){\rm -chain\ from\ } p\ {\rm to\ }x \}.$$
Then $p \notin U$ and  
$$\widetilde U := \{fh_1(x): x\in U\ {\rm and \ } f \in \widehat G \}.$$

Also, $U$ is an open subset of $X$. Let $x\in U$, there exists an $(\e,h_1)$-chain $(p=x_1, \dots , x_{n+1}$ $=x;\ f_1, \dots ,f_n)$ from $p$ to $x$. Now, $x\in B(f_nh_1(x_n), \e(f_nh_1(x_n))) \subset U$. If $y \in  B(f_nh_1(x_n),$ $ \e(f_n h_1(x_n)))$ then $(p=x_1, \dots , x_{n+1}=y;f_1, \dots ,f_n)$ from $p$ to $x$ is  an $(\e,h_1)$-chain .

Let $y \in {\sf{cl}}(\widetilde U)$.  By Lemma \ref{mhlm}, there exists a $\dl \in P(X)$ corresponding to $\e$. Then for some $x \in U$ and $f \in \widehat G$, we have, $f h_1(x) \in B(y, \dl(y))$. Also,
$$d(y,f h_1(x))< \dl(y)< \frac{1}{2}\e(y) <\e(f h_1(x)).$$
Thus, $(x,y;f)$ is an  $(\e ,h_1)$-chain from $x$ to $y$. By transitivity, we can produce an $(\e,h_1)$-chain from $p$ to $y$.
 Hence ${\sf{cl}}(\widetilde U)) \subset U$. Therefore, $U$ is a trapping region for $G$.

Let $A$ be an attractor determined by $U$. Since $p \notin U$ and $A \subset U$, it follows that $p \notin A$.

Moreover, $(p,h_1(p); identity)$ is an $(\e,h_1)$-chain from $p$ to $h_1(p)$. Hence, $h_1(p)\in U$ implies $p \in B(A)$. Therefore, $p \in B(A) \setminus A$.
\end{proof}

The following example will illustrate an application of the above theorem.
\begin{example}
Consider the  polynomial mappings of the complex plane, given by
$$g_n : \C \to \C$$
$$g_n(z)=z^n.$$

We shall identify all the trapping regions and attractors for the dynamics of $G$ on the phase space $\C$. Evidently, the empty set and the phase space itself are trapping regions for $G$. Moreover, in this case, the corresponding attractor and basin of attraction coincide with the trapping regions respectively. Thus, we have, $B(A) \setminus A=\emptyset$ for each attractor.

Next, we shall classify the attractors and basin of attraction for a proper nonempty trapping region of the phase space $\C$ in the following two types. Since the action of elements of the semigroup results on the set $\{z\in \C : |z|>1\}$  in radial expansion and  on  the set $\{z\in \C : |z|<1\}$  in radial contraction. Consequently, any proper subset of the phase space containing the unit circle cannot be a trapping region. Therefore, the attractors and basin of attraction for any trapping region correspond to one of the following type.

{\it Type I:} For each $r\in (0,1)$, the set $U_r=\{z\in \C :|z| < r\}$ is a trapping region for $G$. Since $z\in U_r$ implies $|g(z)| \leq |z|^2 <r$ for every $g\in G$. In particular, for $\widetilde U_r=\{g\circ g_{_2}(z): z\in U_r {\rm \ and\ } g \in \widehat G\}$, we have, ${\sf{cl}}(\widetilde U_r) \subset U_r$. Here, for each $r\in (0,1)$,  the attractor $A_r$ is $\{0\}$ and the basin of attraction $B(A_r)$ is the set $\{z\in \C : |z|<1\}$ for each $r$. Therefore, in this case, $B(A_r) \setminus A_r=\{z\in \C : 0<|z|<1\}$.

{\it Type II:} For each $R \in (1, \ity)$ ,  the set $U_R=\{z\in \C :|z| > R\}$ is a trapping region for $G$. Now,   for each $R \in (1, \ity)$ the attractor $A_R$ is the empty set and the basin of attraction $B(A_R)$ is the set $\{z\in \C : |z|>1\}$ for each $R$. Hence,  $B(A_R) \setminus A_R=\{z\in \C : |z|>1\}$.

By Theorem \ref{atth}, we have,
\begin{equation} \notag
\begin{split}
X \setminus {\sf{CR}}(G) &= \bigcup_A[B(A) \setminus A] \\
\qquad &=\emptyset \cup \{z\in \C : 0<|z|<1\} \cup \{z\in \C : |z|>1\} \\
&= \{z \in \C : |z|\neq 0, |z|\neq 1\}.
\end{split}
\end{equation}
Consequently, we have, ${\sf{CR}}(G)=\{z \in \C : |z|= 0 {\rm \ or \ } |z|= 1\}$.

Next, we shall find all the chain recurrent points for $G$  and show that ${\sf{CR}}(G)=\{z \in \C : |z|= 0 {\rm \ or \ } |z|= 1\}$.

As 0 is a fixed point of $G$, hence a chain recurrent point for $G$.

Let $z\in \{z\in \C : 0<|z|<1\}$, we shall show that $z$ is not a chain recurrent point for $G$. Let $m,M \in \N$ be sufficiently large, so that $1/m$ is small enough such that $|g_{_M}(z)-z|>\frac{1}{m}$. Let $\e \in P(\C)$ be bounded above by $1/m$. Then there is no $(\e,g_{_M})$-chain from $z$ to itself. Thus, $z$ is not a chain recurrent point for $G$.

Also, by similar argument, we see that $z\in \{z\in \C : |z|>1\}$ can not be a chain recurrent point for $G$.

Let $z_0 \in \C$ be such that $|z_0|=1$, that is, $z_0=\exp(\io \pi \ta_0)$.  Let $\e \in P(\C)$ and $n_0\in \N \setminus \{1\}$. We shall construct an $(\e, g{_{n_0}})$-chain from $z_0$ to itself. 

For rationals are dense in reals, we can pick a $z_1=\exp(\io \pi p_1/q_1) \in B_{\e}( g{_{n_0}}(z_0))=\{z:|g{_{n_0}}(z_0)-z|<\e(g{_{n_0}}(z_0))\}$, for some $p_1,q_1 \in \N $. Then, 
\begin{equation} \notag
\begin{split}
g_{2q_1}g{_{n_0}}(z_1) &=\exp(\io \pi p_1 n_0 2q_1/q_1)\\
&=1.
\end{split}
\end{equation}

Since the unit circle is compact, there is $\e_0=\min \{\e(z): |z|=1\}$. Let $p,q \in \N$ be such that $w=\exp(\io \pi p/q) \in B_{\e_0}(z_0)=\{z:|z_0-z|<\e_0\}$. There is an $N \in \N$ sufficiently large such that $z_2=\exp(\io \pi p/(qn_0N)) \in B_{\e}(1)=\{z:|1-z|<\e(1)\}$. Then $g_{_N}g_{n_0}(z_2)=w$ and $|w-z_0|<\e_0 \leq \e(w)$.

Therefore, $(z_0,z_1,z_2,z_0;identity,g_{2q_1},g_{_N} )$ is an $(\e, g{_{n_0}})$-chain from $z_0$ to itself. Thus, $z_0$ is a chain recurrent point for $G$ and, we have, ${\sf{CR}}(G)=\{z \in \C : |z|= 0 {\rm \ or \ } |z|= 1\}$.
\end{example}

\end{document}